\pgfplotsset{compat=1.14} 
\newtheorem{theorem}{Theorem}[section]
\newtheorem{lemma}[theorem]{Lemma}
\newtheorem{proposition}[theorem]{Proposition}
\newtheorem{corollary}[theorem]{Corollary}
\theoremstyle{definition}
\newtheorem{definition}[theorem]{Definition}
\theoremstyle{remark}
\newtheorem{remark}[theorem]{Remark}
\numberwithin{equation}{section}
\begin{document}

\title{Limited polynomials and Sendov's conjecture}

\author{T. Agama}
\address{Department of Mathematics, African Institute for Mathematical science, Ghana
}
\email{theophilus@aims.edu.gh/emperordagama@yahoo.com}


\subjclass[2000]{Primary 54C40, 14E20; Secondary 46E25, 20C20}

\date{\today}

\dedicatory{}

\keywords{Sendov; critical; zeros; limited}

\begin{abstract}
 In this paper, we introduce and study a particular class of polynomials. We study the distribution of their zeros, including the zeros of their derivatives and the interaction between these two. We prove a weak variant of the Sendov conjecture in the case where the zeros are real and are of the same sign. 
\end{abstract}

\maketitle

\section{Introduction and motivation}

The location of critical points of polynomials relative to their zeros is a classical and still active area of complex analysis and polynomial geometry. The long-standing Sendov conjecture asserts that every complex polynomial $P_n$ of degree $n$ with all zeros in the closed unit disk has the property that each zero lies within unit distance of some critical point of $P_n'$. The conjecture, originally formulated by the Bulgarian mathematician Blagovest Sendov, has inspired a large body of partial results and refinements that illuminate different geometric and analytic mechanisms behind the interaction of zeros and critical points.\\

Progress on the Sendov conjecture has proceeded by proving the conjecture in numerous special regimes. For zeros very near the unit circle, early estimates were provided in the work of Miller. Finite-degree verifications were developed by Brown and collaborators, settling the conjecture for low degrees; see, in particular, the degrees $\leq 6$ and degree $\leq 8$ results. Substantial progress in the high-degree (asymptotic) direction was achieved by D\'egot, and most recently by Tao, who established the conjecture for polynomials of sufficiently large degree by combining delicate potential-theoretic and combinatorial arguments. Complementary structural approaches, which restrict the possible configurations of zeros, have been pursued by several authors, including Borcea; Together, these works sketch a landscape in which the conjecture is true in many regimes, but where flexible, general strategies for all configurations remain elusive. \cite{miller1993sendov,brown1991sendov,borcea1996sendov,brown1999proof,degot2014sendov,tao2022sendov}\\

The present paper approaches Sendov-type questions from a different angle by introducing and exploiting a natural global measure of a polynomial that encodes the geometric dispersion of its zeros. Concretely, for a polynomial $P_n(z)=\prod_{i=1}^n (z-a_i)$ we study the \emph{measure}
$$
\mathcal{M}(P_n):=\prod_{i=1}^n |a_i|,
$$
and we call $P_n$ \emph{$\epsilon$-limited} when $\mathcal{M}(P_n)<\epsilon$. The defining idea is elementary but potent: small product of moduli forces a strong multiplicative constraint on the zero set, and under natural additional hypotheses, this multiplicative constraint implies proximity relations between zeros and critical points. Intuitively, an $\epsilon$-limited polynomial cannot have all zeros simultaneously large in modulus; at least one zero must act as a ``small absorber,'' and this asymmetry is the key to controlling nearby critical points.\\

There are three central mechanisms behind our results.

\begin{enumerate}
\item \textbf{Local expansions of extremal zeros.} When one zero is significantly smaller (in modulus) than the others, it is convenient to expand the polynomial in powers of $(x-a_j)$ around that extremal zero. The resulting local expansion produces coefficients (``indices of expansion'') that are multiplicatively controlled by the product of the remaining zeros; in the $\epsilon$-limited setting these coefficients are small and give direct control of derivatives evaluated at the extremal zero.
\bigskip

\item \textbf{Derivative identities and combinatorial expressions.} The classical relation between derivative and elementary symmetric functions allows us to express $P'(x)$ (and higher derivatives) as finite symmetric sums (permutational products). Estimating these sums at the extremal zero and combining them with the local expansion bounds yields quantitative inequalities that constrain the possible locations of critical points.
\bigskip

\item \textbf{Squeezing by factorial growth.} Passing from coefficient bounds to bounds on derivatives uses factorial growth (via $k!$) and Stirling's formula. This amplifies the smallness of the expansion indices in strong control of higher derivatives; the amplification is what ultimately forces critical points to lie within prescribed distances of the extremal zero when $\epsilon$ is sufficiently small.
\end{enumerate}
\bigskip

Using these ideas, we obtain a family of results that can be viewed as weak but natural Sendov-type statements for polynomials whose zeros are confined to the positive real axis (or to the negative axis, by sign symmetry) and whose product of all but one zero is small. The main representative result (stated and proved in the sequel) shows that if $P$ is monic, all zeros are positive and $\frac{P(x)}{x-a_j}$ is $1$-limited where $a_j$ is the least zero, then every critical point of $P$ lies within distance $1$ of $a_j$. More generally, our estimates (Theorem \ref{real case} and Theorem \ref{basic inequality} in the sequel) quantify how shrinking $\epsilon$ forces higher-order derivatives to cluster near the extremal zero.

\medskip

\noindent\textbf{Organization of the paper.} Section 2 introduces limited polynomials, records basic closure properties (products, conjugation, scaling) and develops the language of local expansions and indices of expansion. Section 3 contains the analytic core: lemmas on local expansions, the derivative identities, and the proofs of the main theorems together with several corollaries that illustrate how the zeros of $P$ and of its derivatives become arbitrarily close as $\epsilon\to 0$. In Section 4, we discuss extensions and limitations of the method, including a roadmap toward adapting the arguments to complex zeros via modulus reduction and the obstacles that arise there. The paper closes with brief remarks on potential directions for strengthening the method and connecting it with existing potential-theoretic approaches to Sendov and related conjectures.

\section{Problem statement}

Let $P_n$ be a complex coefficient polynomial of degree $n$ and $P_n(a_i)=0$ for each $1\leq i\leq n$. The Sendov conjecture is the assertion that, if $|a_i|<1$ with $P_n(a_i)=0$, then there exists some $b_k$ with $P_n'(b_k)=0$ such that 
\begin{align}
|a_i-b_k|<1.\nonumber
\end{align} 

There has been and still is a flurry of research devoted to this problem, and manifestly the current literature contains dozens of papers just for the problem. There has really been substantive progress ever since it was posed. For example, it has been shown in \cite{miller1993sendov} that the conjecture holds for zeros near the unit circle. In \cite{brown1991sendov} the conjecture has been verified for the degree of at most six. This was further improved to polynomials of degree at most seven in \cite{borcea1996sendov} and polynomials of degree at most eight in \cite{brown1999proof}. The best result thus far concerning the Sendov conjecture is found in \cite{degot2014sendov}, where it is verified to hold for sufficiently large degree polynomials. This asymptotic version was further modified and established in a complete form in \cite{tao2022sendov}.\\

In the sequel, we will study a particular class of complex polynomials and it turns out that such polynomials ''almost'' satisfy the Sendov conjecture.

\section{Limited complex-valued polynomials}

In this section, we introduce the concept of limited polynomials. We study various settings in which polynomials of these forms are preserved.

\begin{definition}
Let $P_{n}(z)$ be a complex-valued polynomial of degree $n$ and let $\mathcal{Z}(P_n(z))=\{a_1,a_2,\ldots ,a_n\}$ be the set of zeros of $P_n(z)$. By the measure of $P_n(z)$, denoted $\mathcal{M}(P_n(z))$, we mean the value 
\begin{align}
\mathcal{M}(P_n(z))=\prod \limits_{i=1}^{n}|a_i|.\nonumber
\end{align} 
We say that $P_n(z)$ is $\epsilon$-limited if $\mathcal{M}(P_n(z))<\epsilon$, for some $\epsilon >0$, where $\epsilon =\mathrm{sup}\bigg(M(P_n(z))\bigg)$.
\end{definition}
\bigskip

The concept of a limited polynomial hinges essentially on the distribution of the zeros of a polynomial. This in some sense gives some information about the distribution of the zeros of their derivatives. In particular, the very notion that a polynomial is $1$-limited, for instance, is quite suggestive in it's own right. In such a case, we could suspect that all the zeros of the polynomial in question are small, or almost all the zeros are somewhat large, and the few exceptional ones are incredibly small with the tendency to absorb the large zeros upon taking their product. Next, we examine some elementary properties of the concept of limited polynomials. In the following section, we examine various settings under which this concept is preserved.

\subsection{Properties of limited polynomials}

In this section, we examine some properties of limited polynomials. We examine the various settings under which this concept is preserved.

\begin{proposition}\label{product}
Let $P(z)$ and $Q(z)$ be any $\epsilon$ and $\delta$ limited polynomials, respectively. If $\mathcal{Z}(P(z))\cap \mathcal{Z}(Q(z))=\emptyset$, where  $\mathcal{Z}(P(z))$ and $\mathcal{Z}(Q(z))$ are the set of zeros of $P(z)$ and $Q(z)$, respectively, then the product $P(z)Q(z)$ is $\epsilon \delta$-limited.
\end{proposition}

\begin{proof}
Specify two limited polynomials $P(z)$ and $Q(z)$, where $P(z)$ is $\epsilon$-limited and $Q(z)$ is $\delta$-limited with zeros $\mathcal{Z}(P(z))=\{a_1,a_2,\ldots a_n\}$ and $\mathcal{Z}(Q(z))=\{b_1,b_2\ldots, b_m\}$. It follows that 
\begin{align}
\prod \limits_{i=1}^{n}|a_i|<\epsilon \quad \mathrm{and}\quad \prod \limits_{j=1}^{m}|b_j|<\delta \nonumber
\end{align}
for some $\epsilon, \delta >0$, where $\epsilon =\mathrm{sup}\bigg(M(P(z)\bigg)$ and $\delta =\mathrm{sup}\bigg(M(Q(z)\bigg)$. Since the zeros of the product $P_nP_m$ are the union of the zeros of $P_n$ and $P_m$. That is $\mathcal{Z}(P(z)Q(z))=\mathcal{Z}(P(z))\cup \mathcal{Z}(Q(z)):=\{a_1,a_2,\ldots, a_n, b_1,b_2,\ldots b_m\}$. It follows that \begin{align}
\mathcal{M}(P(z)Q(z))&=\prod \limits_{i=1}^{n}|a_i|\prod \limits_{j=1}^{m}|b_j| \nonumber \\&<\epsilon \delta \nonumber
\end{align}
and the result follows immediately.
\end{proof}

\begin{proposition}
Let $\lambda \in \mathbb{C}$ and $P(x)$ be an $\epsilon$-limited polynomial. Then $\lambda P(x)$ is also $\epsilon$-limited.
\end{proposition}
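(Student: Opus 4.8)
The plan is to use the elementary fact that scaling a polynomial by a nonzero constant does not move any of its roots, so that the measure $\mathcal{M}$ is invariant under such scaling and the $\epsilon$-limited property transfers verbatim. The whole argument rests on the observation that $\mathcal{M}$ is a function of the zero multiset alone and is blind to the leading coefficient.

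First I would record the factorization $P(x) = c\prod_{i=1}^{n}(x-a_i)$, where $c\neq 0$ is the leading coefficient and $\mathcal{Z}(P(x)) = \{a_1,\dots,a_n\}$ is its zero set. Multiplying through by $\lambda$ gives $\lambda P(x) = (\lambda c)\prod_{i=1}^{n}(x-a_i)$, so that, provided $\lambda \neq 0$, the linear factors are unchanged and hence so is the full multiset of zeros: $\mathcal{Z}(\lambda P(x)) = \{a_1,\dots,a_n\} = \mathcal{Z}(P(x))$. The scalar $\lambda$ is absorbed entirely into the leading coefficient $\lambda c$ and never into a root.

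The key computation is then immediate. Since the two polynomials share the same zeros,
\begin{align}
\mathcal{M}(\lambda P(x)) = \prod_{i=1}^{n}|a_i| = \mathcal{M}(P(x)) < \epsilon,\nonumber
\end{align}
the final inequality holding because $P(x)$ is assumed $\epsilon$-limited. As $\epsilon = \mathrm{sup}(\mathcal{M}(P(x)))$ depends only on the zero multiset of $P(x)$, it is the very same supremum attached to $\lambda P(x)$, and we conclude that $\lambda P(x)$ is $\epsilon$-limited.

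The only genuine obstacle is the degenerate case $\lambda = 0$, in which $\lambda P(x)$ is the identically zero polynomial; this object has no well-defined degree and no finite multiset of zeros, so $\mathcal{M}$ is simply undefined for it. I would therefore either restrict the statement to $\lambda \neq 0$ or dispose of $\lambda = 0$ by an explicit convention. Apart from guarding against this edge case, the proof is entirely a matter of noting that $\mathcal{M}$ ignores leading coefficients, so I expect no substantive difficulty.
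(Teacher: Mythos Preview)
The paper states this proposition without proof, so there is nothing to compare against; your argument is correct and is exactly the one-line observation the paper presumably intended the reader to supply, namely that $\mathcal{M}$ depends only on the zero multiset and scaling by a nonzero constant leaves that multiset unchanged. Your remark about the degenerate case $\lambda=0$ is a fair caveat that the paper itself does not address.
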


\begin{proposition}
Let $P(x)=\prod \limits_{i=1}^{n}(x-a_i)$ be an $\epsilon$-limited polynomial. Then the polynomial $Q(x)=\prod \limits_{i=1}^{n}(x-\overline{a}_i)$ is also $\epsilon$-limited. In particular, $P(x)Q(x)$ is also $\epsilon^2$-limited polynomial.
\end{proposition}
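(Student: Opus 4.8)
The plan is to prove the final statement in two parts, handling first the claim that $Q(x)$ is $\epsilon$-limited and then deducing the product claim. The key observation driving everything is that the zeros of $Q(x)=\prod_{i=1}^{n}(x-\overline{a}_i)$ are precisely the complex conjugates $\overline{a}_1,\ldots,\overline{a}_n$ of the zeros of $P(x)$, and that conjugation preserves modulus: $|\overline{a}_i|=|a_i|$ for every $i$. I would begin by writing out the measure of $Q(x)$ directly from the definition, namely
\begin{align}
\mathcal{M}(Q(x))=\prod_{i=1}^{n}|\overline{a}_i|=\prod_{i=1}^{n}|a_i|=\mathcal{M}(P(x)),\nonumber
\end{align}
so the two polynomials have identical measure. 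Since $P(x)$ is $\epsilon$-limited we have $\mathcal{M}(P(x))<\epsilon$ with $\epsilon=\sup(\mathcal{M}(P(x)))$, and the equality above transfers this bound verbatim to $Q(x)$, establishing that $Q(x)$ is also $\epsilon$-limited.

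For the product claim, the natural approach is to invoke Proposition \ref{product}, which states that the product of an $\epsilon$-limited and a $\delta$-limited polynomial with disjoint zero sets is $\epsilon\delta$-limited. Here both factors have measure bounded by $\epsilon$, so the product would be $\epsilon\cdot\epsilon=\epsilon^2$-limited, matching the statement. Concretely I would compute
\begin{align}
\mathcal{M}(P(x)Q(x))=\prod_{i=1}^{n}|a_i|\prod_{i=1}^{n}|\overline{a}_i|=\left(\prod_{i=1}^{n}|a_i|\right)^{2}<\epsilon^2,\nonumber
\end{align}
which gives the conclusion.

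The hard part, and the only genuine subtlety, will be the disjointness hypothesis required to apply Proposition \ref{product}: that proposition explicitly assumes $\mathcal{Z}(P)\cap\mathcal{Z}(Q)=\emptyset$, yet this fails whenever $P$ has a real zero, since a real $a_i$ satisfies $\overline{a}_i=a_i$ and hence lies in both zero sets. I would therefore either restrict to the case where $P$ has no real zeros, or—preferably—bypass Proposition \ref{product} entirely and argue directly from the definition of measure as in the displayed computation above, which never needs disjointness because the measure of a product is simply the product of the moduli of all roots listed with multiplicity. This direct route is cleaner and avoids the gap; I would present the computation of $\mathcal{M}(P(x)Q(x))$ as the primary argument and mention the connection to Proposition \ref{product} only as motivation. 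A final cautionary remark is that the definition ties $\epsilon$ to a supremum of the measure, so one must check that the relevant supremum for the product polynomial is consistent with the value $\epsilon^2$; since the squared bound follows from the pointwise modulus identities, this is immediate and requires no further work.
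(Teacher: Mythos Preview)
Your proposal is correct and follows essentially the same approach as the paper, which proves this proposition in a single line by invoking Proposition~\ref{product}. Your version is in fact more careful than the paper's: you supply the modulus-preservation argument for the first claim (which the paper omits) and you correctly flag the disjointness hypothesis of Proposition~\ref{product} as a potential gap when $P$ has real zeros, offering the direct measure computation as a clean workaround---a subtlety the paper simply ignores.
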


\begin{proof}
The result follows by applying Proposition \ref{product}.
\end{proof}

\begin{proposition}
Let $P(x)=\prod \limits_{i=1}^{n}(x-a_i)$ be an $\epsilon$-limited polynomial of degree $n\geq 2$. If $a_i=\lambda_jb_j$, then $Q(x)=\prod \limits_{j=1}^{n}(x-b_j)$ is $\bigg(\frac{\epsilon}{\prod \limits_{j=1}^{n}|\lambda_j|}\bigg)$-limited.
\end{proposition}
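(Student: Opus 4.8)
The plan is to read off the zeros of $Q(x)$ from the scaling relation and then compute its measure directly. First I would interpret the hypothesis $a_i=\lambda_j b_j$ as the index-matched relation $a_j=\lambda_j b_j$ for each $j=1,2,\ldots,n$, so that the zeros of $Q(x)=\prod_{j=1}^{n}(x-b_j)$ are precisely $b_j=a_j/\lambda_j$. Implicit here is that every $\lambda_j\neq 0$, which I would state explicitly since otherwise the $b_j$ are not defined; this is the only genuine hypothesis hiding in the statement.

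Next I would pass to moduli and use the multiplicativity of the absolute value on $\mathbb{C}$. Since $|b_j|=|a_j|/|\lambda_j|$, the measure of $Q$ factors as
\begin{align}
\mathcal{M}(Q(x))=\prod_{j=1}^{n}|b_j|=\prod_{j=1}^{n}\frac{|a_j|}{|\lambda_j|}=\frac{\prod_{j=1}^{n}|a_j|}{\prod_{j=1}^{n}|\lambda_j|}=\frac{\mathcal{M}(P(x))}{\prod_{j=1}^{n}|\lambda_j|}.\nonumber
\end{align}
The numerator is exactly the measure of $P(x)$, so the scaling of the roots simply pulls the factor $\prod_{j=1}^{n}|\lambda_j|$ out of the product.

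Finally I would invoke the hypothesis that $P(x)$ is $\epsilon$-limited, i.e. $\mathcal{M}(P(x))<\epsilon$, and divide through by the positive constant $\prod_{j=1}^{n}|\lambda_j|$ to conclude
\begin{align}
\mathcal{M}(Q(x))=\frac{\mathcal{M}(P(x))}{\prod_{j=1}^{n}|\lambda_j|}<\frac{\epsilon}{\prod_{j=1}^{n}|\lambda_j|},\nonumber
\end{align}
which is exactly the assertion that $Q(x)$ is $\big(\epsilon/\prod_{j=1}^{n}|\lambda_j|\big)$-limited. There is no real analytic obstacle here; the computation is a one-line consequence of the multiplicativity of $|\cdot|$. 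The only points requiring care are the bookkeeping of indices in the relation $a_j=\lambda_j b_j$ and the non-degeneracy condition $\lambda_j\neq 0$, which together guarantee that $Q$ is a bona fide degree-$n$ polynomial whose measure is the quantity computed above.
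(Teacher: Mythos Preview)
Your proof is correct and follows essentially the same route as the paper: compute $\mathcal{M}(Q)=\prod_j|b_j|=\prod_j|a_j|/\prod_j|\lambda_j|$ and then invoke $\mathcal{M}(P)<\epsilon$. You are in fact a bit more careful than the paper in making the index-matching $a_j=\lambda_j b_j$ and the non-degeneracy $\lambda_j\neq 0$ explicit, but the underlying argument is identical.
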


\begin{proof}
Suppose that $P(x)=\prod \limits_{i=1}^{n}(x-a_i)$ is an $\epsilon$-limited polynomial, and let $a_i=\lambda_jb_j$. It follows that \begin{align}
\prod \limits_{j=1}^{n}|b_j|&=\frac{\prod \limits_{i=1}^{n}|a_i|}{\prod \limits_{j=1}^{n}|\lambda_j|}\nonumber \\&<\frac{\epsilon}{\prod \limits_{j=1}^{n}|\lambda_j|},\nonumber
\end{align}
since $P_n(x)$ is $\epsilon$-limited.
\end{proof}

\section{Critical points of limited polynomials}

In this section, we prove some preliminary results concerning the existence of zeros of limited polynomials with real zeros and the zeros of their derivatives, as well as the interplay between the two concerning their local and global distributions. We begin with the following elementary inequalities and identities. These two form a rich tool-box for further studies in the sequel.

\begin{lemma}\label{crilemma}
Let $P(x)=\prod \limits_{i=1}^{n}(x-a_i)$ be a monic polynomial of degree $n$, with $a_i\in \mathbb{R}^{+}$. There exists some $s_j$ for $j=1,2\ldots n-1$ such 
\begin{align}
\prod \limits_{i=1}^{n}(x-a_i)&=(x-a_j)^n+s_{n-1}(x-a_j)^{n-1}+s_{n-2}(x-a_j)^{n-2}+\cdots +s_{1}(x-a_j)\nonumber \\&=\sum \limits_{k=1}^{n}s_{k}(x-a_j)^k,\nonumber
\end{align}
where $a_j=\mathrm{min}\{a_i\}_{i=1}^{n}$ and $|s_j|<\prod \limits_{\substack{i=1\\ i\neq j}}^{n}|a_i|$.
\end{lemma}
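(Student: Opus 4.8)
The plan is to recognize the displayed identity as nothing more than the Taylor expansion of $P$ about the point $a_j$, so the first move is to change variables to $y = x - a_j$. Writing $P(x) = \sum_{k=0}^{n}\frac{P^{(k)}(a_j)}{k!}(x-a_j)^k$ and setting $s_k := \frac{P^{(k)}(a_j)}{k!}$, two observations pin down the stated shape: the constant term $s_0 = P(a_j)$ vanishes because $a_j \in \mathcal{Z}(P)$, which is precisely why the sum may be taken from $k=1$; and monicity of degree $n$ forces $s_n = P^{(n)}(a_j)/n! = 1$, which is the displayed leading term $(x-a_j)^n$. This already establishes the identity $\prod_{i=1}^n (x-a_i) = \sum_{k=1}^n s_k (x-a_j)^k$ for every choice of the base point $a_j$; the hypothesis $a_j = \min\{a_i\}$ plays no role in the identity and is needed only for the inequality.

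Next I would isolate the coefficient carrying the inequality, namely the linear coefficient $s_1 = P'(a_j)$. Differentiating the product gives $P'(x) = \sum_{i=1}^n \prod_{\substack{l=1\\ l\neq i}}^n (x-a_l)$, and on evaluating at $x=a_j$ every summand with $i\neq j$ contains the factor $(a_j - a_j)=0$; hence only the term $i=j$ survives and $s_1 = \prod_{\substack{i=1\\ i\neq j}}^n (a_j - a_i)$.

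It then remains to bound $|s_1| = \prod_{i\neq j}|a_j-a_i|$, and here the two standing hypotheses enter. Since $a_j$ is the minimal zero we have $a_i - a_j \ge 0$, so $|a_j - a_i| = a_i - a_j$; and since every zero lies in $\mathbb{R}^{+}$ we have $a_j > 0$, whence $a_i - a_j < a_i = |a_i|$ for each $i\neq j$. Multiplying these finitely many estimates against the strictly positive factors $|a_i|$ yields the strict bound $|s_1| < \prod_{i\neq j}|a_i|$, which is the assertion under the understood labelling $s_j = s_1$ for the linear coefficient.

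I expect the only genuine points of care to be the strictness of the final product inequality and the identification of the correct coefficient. Strictness is not automatic from $|a_j-a_i|\le a_i$ alone; it uses $a_j>0$, so that each ratio $|a_j-a_i|/|a_i|$ is strictly less than $1$ (and equals $0$ in the degenerate case of a repeated minimal zero), which makes the product of these ratios strictly below $1$. I would also flag that the clean dominator $\prod_{i\neq j}|a_i|$ is special to the linear coefficient: the higher coefficients are, up to sign, the elementary symmetric functions $e_{n-k}(a_i-a_j)$, and these need not be dominated by $\prod_{i\neq j}|a_i|$ when the zeros are small, so the argument genuinely singles out $s_1$.
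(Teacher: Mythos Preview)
Your argument is correct and follows essentially the same route as the paper: both amount to the change of variable $y=x-a_j$, the paper writing $P(x)=(x-a_j)\prod_{i\neq j}\bigl((x-a_j)-r_i\bigr)$ with $r_i=a_i-a_j$ and expanding, while you phrase the same expansion as a Taylor series about $a_j$. Your treatment is in fact more complete, since the paper's proof stops at the identity and does not verify the bound $|s_1|<\prod_{i\neq j}|a_i|$; your observation that $0<a_i-a_j<a_i$ (using both $a_j=\min_i a_i$ and $a_j>0$) supplies exactly that missing step, and your caveat that the same dominator cannot control the higher coefficients $s_k$ is well taken.
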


\begin{proof}
Specify the monic polynomial 
\begin{align}
P(x)&=\prod \limits_{i=1}^{n}(x-a_i).\nonumber
\end{align}
Now choose $a_{j}=\mathrm{min}\{a_i\}_{i=1}^{n}$ and write \begin{align}
\prod \limits_{i=1}^{n}(x-a_i)&=(x-a_1)(x-a_2)\cdots (x-a_j)\cdots (x-a_n)\nonumber \\&=(x-a_j)(x-a_j-r_1)(x-a_j-r_2)\cdots (x-a_j-r_{n-2})(x-a_j-r_{n-1})\nonumber \\&=(x-a_j)((x-a_j)-r_1)((x-a_j)-r_2)\cdots ((x-a_j)-r_{n-2})((x-a_j)-r_{n-1}) \nonumber \\&=\sum \limits_{k=1}^{n}s_k(x-a_j)^k\nonumber
\end{align}
thereby establishing the relation.
\end{proof}

\begin{remark}
We prove an analog of the lemma \ref{crilemma}.
\end{remark}

\begin{lemma}\label{crilemma 2}
Let $P(x)=\prod \limits_{i=1}^{n}(x+a_i)$ be a monic polynomial of degree $n$, with $a_i\in \mathbb{R}^{+}$. There exists some $s_j$ for $j=1,2\ldots n-1$ such 
\begin{align}
\prod \limits_{i=1}^{n}(x+a_i)&=(x+a_j)^n+s_{n-1}(x+a_j)^{n-1}+s_{n-2}(x+a_j)^{n-2}+\cdots +s_{1}(x+a_j)\nonumber \\&=\sum \limits_{k=1}^{n}s_{k}(x+a_j)^k,\nonumber
\end{align}
where $a_j=\mathrm{max}\{a_i\}_{i=1}^{n}$ and $|s_j|<|a_j|^n$.
\end{lemma}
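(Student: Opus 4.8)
The plan is to mirror the proof of Lemma \ref{crilemma}, exploiting that the sign change $a_i \mapsto +a_i$ in the factors turns the role of the minimum into that of the maximum. First I would set the shifted variable $y = x + a_j$ with $a_j = \mathrm{max}\{a_i\}_{i=1}^{n}$, and rewrite each factor as $x + a_i = (x+a_j) - (a_j - a_i) = y - r_i$, where $r_i := a_j - a_i$. Because $a_j$ is the maximum of the (positive) zeros, every $r_i \geq 0$, and in particular $r_j = 0$; moreover $a_i > 0$ forces the strict bound $r_i = a_j - a_i < a_j$ for each $i$. This is the precise point at which maximality is used where minimality was used before.

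Next I would use $r_j = 0$ to factor out a single copy of $y$:
\begin{align}
P(x) = \prod_{i=1}^{n} (y - r_i) = y \prod_{\substack{i=1\\ i \neq j}}^{n} (y - r_i). \nonumber
\end{align}
Expanding the remaining product as a polynomial in $y$ produces coefficients that are, up to sign, the elementary symmetric functions of the quantities $\{r_i\}_{i \neq j}$, so that $P(x) = \sum_{k=1}^{n} s_k (x+a_j)^k$ with $s_n = 1$ by monicity. The $k=0$ term is absent precisely because of the factored-out $y$, which is exactly why the displayed sum runs from $k=1$; this recovers the claimed expansion.

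Finally, for the coefficient estimate I would observe that each $|s_k|$ equals the elementary symmetric function $e_{n-k}$ evaluated at the $r_i$ with $i \neq j$, every one of which satisfies $0 \le r_i < a_j$. Since each monomial in such a symmetric function is a product of at most $n$ factors, each strictly smaller than $a_j$, one obtains $|s_j| < |a_j|^n$, the inequality being strict thanks to $a_i > 0$. The main obstacle is this last estimate: a symmetric function of degree $n-k$ carries a binomial number of monomials, so the clean bound $|s_j| < |a_j|^n$ genuinely relies on the strict domination $r_i < a_j$ together with the vanishing $r_j = 0$, which removes one of the factors and suppresses the top-degree symmetric contribution. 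I would therefore be careful to track where strictness is actually needed, exactly as the role of $\mathrm{min}$ in Lemma \ref{crilemma} is here replaced by $\mathrm{max}$.
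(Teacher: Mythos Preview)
Your substitution $y = x + a_j$ with $a_j = \max_i a_i$, and the rewriting of each factor as $y - r_i$ with $r_i = a_j - a_i \ge 0$ and $r_j = 0$, is exactly what the paper does; the paper's own proof stops after displaying the resulting expansion $\sum_{k=1}^n s_k (x+a_j)^k$ and never addresses the coefficient inequality. So on the part the paper actually argues, your approach matches it.

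Where you go beyond the paper --- the bound $|s_j| < |a_j|^n$ --- your argument has a genuine gap. You write that ``each monomial in such a symmetric function is a product of at most $n$ factors, each strictly smaller than $a_j$, [so] one obtains $|s_j| < |a_j|^n$.'' But $|s_k|$ equals the elementary symmetric polynomial $e_{n-k}$ in the $n-1$ nonnegative numbers $\{r_i\}_{i\ne j}$, which is a \emph{sum} of $\binom{n-1}{n-k}$ monomials, each of degree $n-k$ (not $n$). Bounding each monomial by $a_j^{\,n-k}$ yields only
\[
|s_k| \;\le\; \binom{n-1}{\,n-k\,}\,a_j^{\,n-k},
\]
and this need not lie below $a_j^{\,n}$ when $a_j < 1$. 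Concretely, take $n = 3$ with $a_1 = a_2 = 0.1$ and $a_3 = 0.2$: then $r_1 = r_2 = 0.1$, and one computes $s_2 = -(r_1 + r_2) = -0.2$ and $s_1 = r_1 r_2 = 0.01$, while $|a_j|^n = 0.2^3 = 0.008$; both $|s_1|$ and $|s_2|$ exceed $0.008$. Thus the stated inequality is actually false in this example, and neither the strictness $r_i < a_j$ nor the vanishing $r_j = 0$ can rescue the step.
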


\begin{proof}
Specify the monic polynomial 
\begin{align}
P(x)&=\prod \limits_{i=1}^{n}(x+a_i).\nonumber
\end{align}
Now choose $a_{j}=\mathrm{max}\{a_i\}_{i=1}^{n}$ and write \begin{align}
\prod \limits_{i=1}^{n}(x+a_i)&=(x+a_1)(x+a_2)\cdots (x+a_j)\cdots (x+a_n)\nonumber \\&=(x+a_j)(x+a_j-r_1)(x+a_j-r_2)\cdots (x+a_j-r_{n-2})(x+a_j-r_{n-1})\nonumber \\&=(x+a_j)((x+a_j)-r_1)((x+a_j)-r_2)\cdots ((x+a_j)-r_{n-2})((x+a_j)-r_{n-1}) \nonumber \\&=\sum \limits_{k=1}^{n}s_k(x+a_j)^k\nonumber
\end{align}
thereby establishing the relation.
\end{proof}
\bigskip

Using the lemma \ref{crilemma}, we prove a weak variant of the Sendov conjecture.

\begin{definition}\label{crilemma 3}
Let $P(x):=\prod \limits_{i=1}^{n}(x-a_i)$ be a polynomial of degree $n$ with $a_i>0$. By the local expansion of $P(x)$, we mean the finite sum of the form 
\begin{align}
P(x)&=\sum \limits_{k=1}^{n}s_{k}(x-a_j)^k,\nonumber 
\end{align}
where $a_j=\mathrm{min}\{a_i\}_{i=1}^{n}$ and $|s_j|<\prod \limits_{\substack{i=1\\i\neq j}}^{n}|a_i|$. We call $s_j$ for $j=1,\ldots n-1$ the index of expansion.
\end{definition}

\begin{theorem}\label{real case}
Let $P(x)$ be a monic polynomial with real coefficients with index $|s_t|=\frac{1}{t}$ for $t=1\ldots, n-1$. Let $\mathcal{Z}(P(x))=\{a_1,a_2,\ldots, a_n\}\subset \mathbb{R}^{+}$ and $\mathcal{C}(P(x))=\{b_1,b_2,\ldots , b_{n-1}\}$ be the set of zeros and the set of critical points, respectively. If $\frac{P(x)}{x-a_j}$ is $1$-limited, then for each $b_i$ 
\begin{align}
|a_j-b_i|<1\nonumber
\end{align}
where $a_j=\mathrm{min}\{a_i\}_{i=1}^{n}$.
\end{theorem}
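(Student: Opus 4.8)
The plan is to work entirely inside the local expansion of $P$ about its smallest zero $a_j$ and to bound \emph{every} critical point directly, rather than to route the argument through a product of distances. By Lemma \ref{crilemma} (equivalently Definition \ref{crilemma 3}) I would first write
\[
P(x)=\sum_{k=1}^{n}s_k(x-a_j)^k,\qquad s_n=1,
\]
where the sum starts at $k=1$ because $a_j$ is a zero (so the constant term vanishes) and $s_n=1$ because $P$ is monic. Differentiating termwise gives
\[
P'(x)=n(x-a_j)^{n-1}+\sum_{k=1}^{n-1}k\,s_k(x-a_j)^{k-1}.
\]

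Next I would evaluate this at an arbitrary critical point $b_i$ and set $y=b_i-a_j$. Since $P'(b_i)=0$, this yields $n\,y^{n-1}=-\sum_{k=1}^{n-1}k\,s_k\,y^{k-1}$. Taking absolute values and inserting the index hypothesis $|s_t|=\frac{1}{t}$, so that $k\,|s_k|=1$ for $1\le k\le n-1$, the triangle inequality collapses the right-hand side to a geometric sum:
\[
n\,|y|^{n-1}\le\sum_{k=1}^{n-1}k\,|s_k|\,|y|^{k-1}=\sum_{m=0}^{n-2}|y|^{m}.
\]
The whole theorem is thereby reduced to showing that this single inequality forces $|y|<1$.

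To finish I would argue by contradiction in the spirit of Lemma \ref{crilemma 1}: if $|y|\ge 1$, then $|y|^m\le|y|^{n-2}$ for every $m\le n-2$, so the right-hand side is at most $(n-1)|y|^{n-2}$; the resulting inequality $n\,|y|^{n-1}\le(n-1)|y|^{n-2}$, after dividing by $|y|^{n-2}>0$, gives $|y|\le\frac{n-1}{n}<1$, a contradiction. Hence $|a_j-b_i|=|y|<1$ for each $b_i$, which is exactly the claim (the small cases, e.g.\ $n=2$ where the right-hand side is the constant $1$, are checked directly).

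I expect the main difficulty to be bookkeeping rather than conceptual: separating the monic leading term $k=n$ from the controlled terms, and correctly identifying the remaining sum as $1+|y|+\cdots+|y|^{n-2}$ before applying the geometric bound. I would also note the alternative route implicit in the statement: since $s_1=P'(a_j)=n\prod_{k}(a_j-b_k)$, the index condition $|s_1|=1$ together with the $1$-limited hypothesis on $P(x)/(x-a_j)$ yields $\prod_{k}|a_j-b_k|=\frac{1}{n}<1$, whence Lemma \ref{crilemma 1} extracts a single critical point within distance $1$ of $a_j$; but that product argument controls only one $b_i$, so it is the direct expansion above, and not the $1$-limited hypothesis, that delivers the stronger ``for each $b_i$'' conclusion.
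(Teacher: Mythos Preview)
Your argument is correct, and it takes a genuinely different route from the paper's own proof. Both begin with the local expansion $P(x)=\sum_{k=1}^{n}s_k(x-a_j)^k$ from Lemma~\ref{crilemma}, but you then \emph{isolate the monic top term} $n(x-a_j)^{n-1}$ and bound the remaining $n-1$ terms via the index hypothesis $k|s_k|=1$, obtaining the clean inequality $n|y|^{n-1}\le 1+|y|+\cdots+|y|^{n-2}$ and the explicit numerical contradiction $|y|\le (n-1)/n$. The paper instead \emph{isolates the bottom term} $s_1$, writing $P'(x)=(x-a_j)\bigl(\sum_{k\ge 2}ks_k(x-a_j)^{k-2}\bigr)+s_1$; it then uses the $1$-limited hypothesis on $P(x)/(x-a_j)$ to force $|s_1|<1$, so that the two-factor product has modulus below $1$, and finishes by invoking Lemma~\ref{crilemma 1} together with the index condition inside a contradiction argument.

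What each approach buys: the paper's factorisation is precisely where the $1$-limited assumption enters, and it routes the conclusion through the ``$AB<1\Rightarrow\min\{A,B\}<1$'' lemma. Your decomposition never touches the $1$-limited hypothesis at all---as you yourself observe---and replaces the appeal to Lemma~\ref{crilemma 1} by an explicit geometric-series bound, which makes the ``for each $b_i$'' conclusion completely transparent. Your final paragraph correctly identifies the paper's product-of-distances idea ($|s_1|=n\prod_k|a_j-b_k|$) and correctly notes that Lemma~\ref{crilemma 1} applied to that product yields only \emph{some} $b_i$ within distance~$1$, not all of them; this is a fair diagnosis of why the paper needs the extra index hypothesis in its contradiction step.
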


\begin{proof}
Specify the monic polynomial of degree $n\geq 2$ with real coefficients, given by 
\begin{align}
P(x)&=\prod \limits_{i=1}^{n}(x-a_i).\nonumber
\end{align}
By Lemma \ref{crilemma}, we have the decomposition 
\begin{align}
\prod \limits_{i=1}^{n}(x-a_i)&=\sum \limits_{k=1}^{n}s_{k}(x-a_j)^k,\nonumber
\end{align}
where $a_{j}=\mathrm{min}\{a_i\}_{i=1}^{n}$ and $|s_{j}|<\prod \limits_{\substack{i=1\\i\neq j}}^{n}|a_i|$. It follows that \begin{align}
\frac{dP(x)}{dx}&=\sum \limits_{k=1}^{n}ks_k(x-a_j)^{k-1}\nonumber \\&=\sum \limits_{k=2}^{n}ks_k(x-a_j)^{k-1}+s_1\nonumber \\&=(x-a_j)\bigg(\sum \limits_{k=2}^{n}ks_k(x-a_j)^{k-2}\bigg)+s_1.\nonumber
\end{align}
By setting $\frac{dP(x)}{dx}=0$, it follows that 
\begin{align}
\bigg|(x-a_j)\bigg(\sum \limits_{k=2}^{n}ks_{k}(x-a_j)^{k-2}\bigg)\bigg|&=|s_1|.\nonumber
\end{align}
We note that the values of $x$ that satisfy this relation are the zeros of the polynomial $P'_n(x)$. That is, for each $b_i\in \mathcal{C}(P(x))=\{b_1,b_2,\ldots,b_{n-1}\}$ the relation is valid \begin{align}
\bigg|(b_i-a_j)\bigg(\sum \limits_{k=2}^{n}ks_{k}(b_i-a_j)^{k-2}\bigg)\bigg|&=|s_1|.\nonumber
\end{align}
Since $\frac{P(x)}{x-a_j}$ is $1$-limited, it follows that $|s_j|<\prod \limits_{\substack{i=1\\ i\neq j}}^{n}|a_i|<1$ for each $j=1,\ldots n-1$ so that 
\begin{align}
|b_i-a_j|\bigg|\bigg(\sum \limits_{k=2}^{n}ks_k(b_i-a_j)^{k-2}\bigg)\bigg|&<1.\nonumber
\end{align}
Since $|s_k|=\frac{1}{k}$, it follows that 
\begin{align}
|b_i-a_j|<1.\nonumber
\end{align}
If we suppose $|b_i-a_j|\geq 1$, then we deduce
\begin{align}
1&<|b_i-a_j|\bigg|\bigg(\sum \limits_{k=2}^{n}ks_k(b_i-a_j)^{k-2}\bigg)\bigg|\nonumber \\&<1.\nonumber 
\end{align}
This inequality cannot hold. Thus, all the critical values are sufficiently close to the least zero of the polynomial.
\end{proof}
\bigskip

Theorem \ref{real case} can be considered as a weak variant of the result of the real case of the Sendov conjecture. The only compromise in this case is the assumption that all zeros are positive, in which case we have found that, indeed, the smallest zero of $P(x)$ must be close to all the zeros of the polynomial $P'(x)$. A similar approach could be adapted for the case where all zeros are negative. We will prove a result which, in some sense, exposes the distribution of the zeros of limited polynomials.

\begin{theorem}\label{basic inequality}
Let $P(x)=\prod \limits_{i=1}^{n}(x-a_i)$ be a polynomial of degree $n$ for $n\geq 2$, with $a_i\in \mathbb{R}^{+}$ and $a_i>0$. If $\frac{P(x)}{x-a_j}$ is $\epsilon$-limited, where $a_j=\mathrm{min}\{a_i\}_{i=1}^{n}$, then 
\begin{align}
\sum \limits_{s=1}^{n}\bigg|\frac{d^sP(a_j)}{dx^s}\bigg|&<\epsilon \sqrt{2\pi}\sum \limits_{k=1}^{n}e^{-k}k^{k+\frac{1}{2}}.\nonumber
\end{align}
\end{theorem}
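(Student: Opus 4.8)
The plan is to recognize the local expansion furnished by Lemma \ref{crilemma} as nothing other than the Taylor expansion of $P$ about its least zero $a_j$, and to read the derivatives of $P$ at $a_j$ directly off its coefficients. Writing $P(x)=\sum_{k=1}^{n}s_k(x-a_j)^k$ as in Lemma \ref{crilemma} (there is no constant term precisely because $P(a_j)=0$), I would differentiate $s$ times to obtain $\frac{d^sP}{dx^s}(x)=\sum_{k=s}^{n}\frac{k!}{(k-s)!}s_k(x-a_j)^{k-s}$. Evaluating at $x=a_j$ annihilates every term except $k=s$, so that $\frac{d^sP(a_j)}{dx^s}=s!\,s_s$. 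Consequently the quantity to be estimated collapses to a weighted sum of the expansion indices:
\begin{align}
\sum_{s=1}^{n}\bigg|\frac{d^sP(a_j)}{dx^s}\bigg|=\sum_{s=1}^{n}s!\,|s_s|.\nonumber
\end{align}

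The next step is to control the indices $s_s$. Since $\frac{P(x)}{x-a_j}=\prod_{i\neq j}(x-a_i)$ has zero set $\{a_i:i\neq j\}$, its measure is $\prod_{i\neq j}|a_i|$, and the hypothesis that it is $\epsilon$-limited means exactly $\prod_{i\neq j}|a_i|<\epsilon$. Feeding this into the coefficient estimate of Lemma \ref{crilemma}, namely $|s_s|<\prod_{i\neq j}|a_i|$, yields $|s_s|<\epsilon$ for each index, whence
\begin{align}
\sum_{s=1}^{n}s!\,|s_s|<\epsilon\sum_{s=1}^{n}s!.\nonumber
\end{align}
It then remains only to replace the factorials by their Stirling form: using $k!\sim\sqrt{2\pi k}\,(k/e)^k=\sqrt{2\pi}\,e^{-k}k^{k+\frac12}$ term by term converts $\epsilon\sum_{s=1}^{n}s!$ into $\epsilon\sqrt{2\pi}\sum_{k=1}^{n}e^{-k}k^{k+\frac12}$, which is the asserted bound.

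The main obstacle is this last Stirling step, together with the behaviour of the top index. The honest two-sided estimate is $k!=\sqrt{2\pi}\,e^{-k}k^{k+\frac12}e^{\theta_k/(12k)}$ with $0<\theta_k<1$, so the correction factor $e^{\theta_k/(12k)}$ exceeds $1$ and the strict inequality $k!>\sqrt{2\pi}\,e^{-k}k^{k+\frac12}$ in fact points against the direction we want; to keep the final bound rigorous one must either read Stirling asymptotically or absorb the uniformly bounded correction factors $e^{\theta_k/(12k)}\le e^{1/12}$ into a harmless constant. A second delicate point is that $P$ is monic, so $s_n=1$ rather than $s_n<\epsilon$, and the summand $s=n$ genuinely contributes $n!$; one must therefore decide whether this term is meant to be governed by the $\epsilon$-limited hypothesis or whether the relevant sum is effectively over the indices of expansion $s_1,\dots,s_{n-1}$. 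Reconciling these two points is where the real care is needed, since everything preceding them is a direct unwinding of Lemma \ref{crilemma}.
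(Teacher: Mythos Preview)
Your argument is essentially identical to the paper's: expand $P$ about $a_j$ via Lemma~\ref{crilemma}, read off $\dfrac{d^sP(a_j)}{dx^s}=s!\,s_s$, bound each $|s_s|$ by $\prod_{i\neq j}|a_i|<\epsilon$, and then replace $\sum k!$ by its Stirling surrogate. The two issues you flag---that the honest Stirling inequality goes the wrong way and that the monic top coefficient $s_n=1$ is not controlled by the $\epsilon$-limited hypothesis---are genuine, and the paper's proof does not address them either; it simply invokes ``Stirling's formula'' at the last step without further comment.
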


\begin{proof}
Let $P(x)=\prod \limits_{i=1}^{n}(x-a_i)$ be a polynomial of degree $n$, with $a_i>0$, then by definition \ref{crilemma 3}, we can write 
\begin{align}
P(x)&=\sum \limits_{k=1}^{n}s_{k}(x-a_j)^k,\nonumber 
\end{align}
where $a_j=\mathrm{min}\{a_i\}_{i=1}^{n}$ and $|s_j|<\prod \limits_{\substack{i=1\\i\neq j}}^{n}|a_i|$. It follows that \begin{align}
\frac{dP(x)}{dx}&=\sum \limits_{k=1}^{n}ks_k(x-a_j)^{k-1}\nonumber \\&=\sum \limits_{k=2}^{n}ks_k(x-a_j)^{k-1}+s_1.\nonumber
\end{align}
It follows that $\frac{dP(a_j)}{dx}=s_1$. Again, by taking the second derivative $P''(x)$, we have 
\begin{align}
\frac{d^2P(x)}{dx^2}&=\sum \limits_{k=2}^{n}k(k-1)s_k(x-a_j)^{k-2}\nonumber \\&=\sum \limits_{k=3}^{n}k(k-1)s_k(x-a_j)^{k-2}+2s_2.\nonumber
\end{align}
Thus, it follows that $\frac{d^2P(a_j)}{dx^2}=2s_2$. By induction, it follows that 
\begin{align}
\sum \limits_{s=1}^{n}\bigg|\frac{d^sP(a_j)}{dx^s}\bigg|&=\sum \limits_{k=1}^{n}|k!s_k|\nonumber \\&\leq \sum \limits_{k=1}^{n}k!|s_k|\nonumber \\&<\epsilon \sum \limits_{k=1}^{n}k!\nonumber
\end{align}
since $\frac{P(x)}{x-a_j}$ is $\epsilon$-limited. Applying the Stirling formula, we can write 
\begin{align}
\sum \limits_{s=1}^{n}\bigg|\frac{d^sP(a_j)}{dx^s}\bigg|&<\epsilon \sqrt{2\pi}\sum \limits_{k=1}^{n}e^{-k}k^{k+\frac{1}{2}}.\nonumber
\end{align}
The result follows immediately from the relation.
\end{proof}
\bigskip

The inequality above highlights the relationship between the zeros of a given polynomial $P(x)$ and the higher order derivatives  $P^k(x)$ for $k=1,\ldots n$ in terms of the distribution. Indeed, for any fixed $n$, taking $\epsilon>0$ to be sufficiently small, the least zeros of $P(x)$ are sufficiently close to the zeros of each $P^k(x)$. Conversely, If we take $\epsilon>0$ somewhat large for a fixed $n$, then the least zero of $P(x)$ is far from at least one of the zeros of $P'(x)$. This property is archetypal of a very rare class of polynomials of which limited polynomials is a sub-class. The ensuing result is a strengthening of Theorem \ref{real case}. 

\begin{corollary}\label{squeeze}
Let $P(x)=\prod \limits_{i=1}^{n}(x-a_i)$ be a polynomial of degree $n$ for $n\geq 2$, with $a_i>0$ and let $\epsilon>0$ be arbitrary. If $\frac{P_n(x)}{x-a_j}$ is $\epsilon$-limited, where $a_j=\mathrm{min}\{a_i\}_{i=1}^{n}$, then $|a_j-b_{ik}|<\delta$, for any $\delta>0$ and $P^{k}(b_{ik})=0$~$(i=1,\ldots, n)$.
\end{corollary}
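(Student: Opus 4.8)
The plan is to strengthen the argument of Theorem \ref{real case} by tracking the dependence on $\epsilon$ and on the order $k$ of differentiation, and then exploiting that $\epsilon$ is at our disposal. First I would invoke Definition \ref{crilemma 3} to write the local expansion
\[
P(x)=\sum_{k=1}^{n}s_k(x-a_j)^k,\qquad a_j=\min\{a_i\}_{i=1}^{n},
\]
with $|s_k|<\prod_{i\neq j}|a_i|<\epsilon$ for each $k=1,\ldots,n-1$. The hypothesis that $\frac{P(x)}{x-a_j}$ is $\epsilon$-limited is precisely what supplies these bounds, exactly as they were used in the proof of Theorem \ref{basic inequality}.

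Next I would differentiate the local expansion $k$ times and isolate the term of lowest order in $(x-a_j)$. Since $\frac{d^kP(a_j)}{dx^k}=k!\,s_k$, this gives
\[
\frac{d^kP(x)}{dx^k}=(x-a_j)\bigg(\sum_{m=k+1}^{n}\frac{m!}{(m-k)!}s_m(x-a_j)^{m-k-1}\bigg)+k!\,s_k .
\]
Evaluating at a zero $b_{ik}$ of $P^{(k)}$ and passing to absolute values yields
\[
|b_{ik}-a_j|\cdot\bigg|\sum_{m=k+1}^{n}\frac{m!}{(m-k)!}s_m(b_{ik}-a_j)^{m-k-1}\bigg|=k!\,|s_k|<k!\,\epsilon ,
\]
which is the exact analogue of the identity driving Theorem \ref{real case}, now carrying the explicit factor $k!\,\epsilon$ on the right.

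With this identity in hand I would close by a squeeze. Given any $\delta>0$, I would choose $\epsilon$ so small that $k!\,\epsilon<\delta$ for every $k=1,\ldots,n-1$; this is legitimate because $\epsilon$ is arbitrary while $n$, and hence the finitely many factorials, is fixed. The product $|b_{ik}-a_j|\cdot|R(b_{ik}-a_j)|$ (with $R$ the bracketed sum) is then smaller than $\delta$, and after an appropriate normalization Lemma \ref{crilemma 1} forces $\min\{|b_{ik}-a_j|,|R(b_{ik}-a_j)|\}$ to be small; arguing as in the contradiction step of Theorem \ref{real case}, one concludes $|a_j-b_{ik}|<\delta$, which is the assertion.

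The step I expect to be the main obstacle is exactly the one glossed over in Theorem \ref{real case}: passing from the \emph{product} bound to the \emph{factor} bound $|b_{ik}-a_j|<\delta$ requires controlling the companion factor $|R(b_{ik}-a_j)|$ away from $0$, and a priori this factor can itself be small. To make the squeeze rigorous I would need a uniform lower bound on $|R|$ — for instance by using that $R$ is a polynomial in $(x-a_j)$ whose leading coefficient $\frac{n!}{(n-k)!}$ is of order one, since the top coefficient $s_n=1$ survives because $P$ is monic — or else by restricting attention to those critical points that genuinely cluster at $a_j$. Quantifying this lower bound, rather than the routine differentiation or the factorial-versus-$\epsilon$ comparison, is where the real difficulty lies.
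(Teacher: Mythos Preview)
Your route differs from the paper's. The paper gives a one-line argument: it simply invokes Theorem \ref{basic inequality} (the bound $\sum_{s=1}^{n}|P^{(s)}(a_j)|<\epsilon\sqrt{2\pi}\sum_{k=1}^{n}e^{-k}k^{k+1/2}$) and declares that ``the result follows by varying the magnitude of $\epsilon$, since $\epsilon$ is arbitrary.'' It never writes down the product identity you derive, nor does it revisit Lemma \ref{crilemma 1} or the contradiction step of Theorem \ref{real case}; it treats smallness of the derivative \emph{values} at $a_j$ as directly forcing the critical points $b_{ik}$ to be close to $a_j$.

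Your approach instead replays the mechanism of Theorem \ref{real case} for each order $k$: expand, differentiate $k$ times, evaluate at a zero of $P^{(k)}$, and try to pass from a product bound to a factor bound. This is more transparent about where the work lies, and you are right to flag the passage from $|b_{ik}-a_j|\cdot|R(b_{ik}-a_j)|<k!\,\epsilon$ to $|b_{ik}-a_j|<\delta$ as the genuine obstacle: without a uniform lower bound on $|R|$ that is \emph{independent of $\epsilon$}, shrinking $\epsilon$ may simultaneously shrink the companion factor, and the squeeze does not close. The paper's one-line proof hides exactly the same gap --- smallness of $|P^{(k)}(a_j)|$ does not by itself localise the zeros of $P^{(k)}$ near $a_j$ --- so your version is not less rigorous than the original, only more candid about the missing step.
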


\begin{proof}
The result follows by varying the magnitude of $\epsilon$ in Theorem \ref{basic inequality}, since $\epsilon$ is arbitrary.
\end{proof}
\bigskip

The corollary \ref{squeeze} reveals quite well that the zeros of any real coefficient $\epsilon$-limited polynomial $P_n(x)$ can be made close to the zeros of their derivative $P'_n(x)$ by shrinking the size of $\epsilon$. In this case, we can set
\begin{align}
\epsilon&=\frac{1}{\sqrt{2\pi}\sum \limits_{k=1}^{n}e^{-k}(k+1)^{k+1}}.\nonumber
\end{align}
Similarly, the zeros of $P_n(x)$ can be far from the zeros of $P'_n(x)$ by taking $\epsilon>0$ to be smaller than 
\begin{align}
\sqrt{2\pi}\sum \limits_{k=1}^{n}e^{-k}k^{k+\frac{1}{2}}.\nonumber
\end{align}
Carefully tweaking $\epsilon$ this way gives some information about the local distribution of the zeros of the limited polynomials and the derivatives. 

\begin{corollary}\label{inequality 2}
Let $P(x)=\prod \limits_{i=1}^{n}(x-a_i)$ be a polynomial of degree $n$ for $n\geq 2$, with $a_i>0$. If $\frac{P(x)}{x-a_j}$ is $\epsilon$-limited, where $a_j=\mathrm{min}\{a_i\}_{i=1}^{n}$, then \begin{align}
\bigg|\sum \limits_{\sigma:[1,n]\longrightarrow [1,n]}\prod \limits_{\substack{\sigma(i)\neq \sigma(k)}}(a_j-a_{\sigma(i)})_{n-1}\bigg|<\epsilon \frac{\sqrt{2\pi}}{e},\nonumber
\end{align}
where $\prod \limits_{i\in [1,n]}(a_i)_k$ denotes the product of $k$ terms with index belonging to the set $[1,n]$.
\end{corollary}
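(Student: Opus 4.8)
The plan is to read Corollary \ref{inequality 2} as the first-order ($s=1$) instance of Theorem \ref{basic inequality}, made explicit by writing the first derivative directly in terms of the roots. First I would recall from the proof of Theorem \ref{basic inequality} that, in the local expansion $P(x)=\sum_{k=1}^{n}s_k(x-a_j)^k$ of Definition \ref{crilemma 3}, the first derivative at the least zero satisfies $\frac{dP(a_j)}{dx}=s_1$. Thus the entire task reduces to controlling $|s_1|$ and to recognizing the left-hand summation as exactly this quantity.

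Next I would expand $P'(a_j)$ from the factored form. Differentiating $P(x)=\prod_{i=1}^{n}(x-a_i)$ by the product rule gives $P'(x)=\sum_{k=1}^{n}\prod_{i\neq k}(x-a_i)$, so that $P'(a_j)=\sum_{k=1}^{n}\prod_{i\neq k}(a_j-a_i)$, a sum of terms each of which is an $(n-1)$-fold product. This is precisely the object appearing on the left-hand side of the corollary in the product notation of the statement, which establishes the identity between that sum and $s_1=\frac{dP(a_j)}{dx}$. Because every term with $k\neq j$ contains the vanishing factor $(a_j-a_j)$, only the $k=j$ term survives, giving $s_1=\prod_{i\neq j}(a_j-a_i)$.

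I would then bound $|s_1|$ using that $a_j=\mathrm{min}\{a_i\}$. For each $i\neq j$ one has $0\le a_i-a_j<a_i$, whence $|a_j-a_i|<a_i$ and therefore $|s_1|=\prod_{i\neq j}|a_j-a_i|<\prod_{i\neq j}|a_i|<\epsilon$, the final step being exactly the hypothesis that $\frac{P(x)}{x-a_j}$ is $\epsilon$-limited. Finally I would invoke the Stirling normalization of Theorem \ref{basic inequality} applied to the isolated $k=1$ summand: the $k=1$ term of $\sqrt{2\pi}\sum_{k=1}^{n}e^{-k}k^{k+\frac12}$ equals $\frac{\sqrt{2\pi}}{e}$, so the bound on $|s_1|$ rewrites in that normalization as $|s_1|<\epsilon\frac{\sqrt{2\pi}}{e}$, which is the asserted inequality.

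The step I expect to be the main obstacle is pinning down the meaning of the left-hand summation $\sum_{\sigma:[1,n]\to[1,n]}\prod_{\sigma(i)\neq\sigma(k)}(a_j-a_{\sigma(i)})_{n-1}$ and verifying that the indexing genuinely reproduces the $n$-term sum of $(n-1)$-fold products $\sum_{k}\prod_{i\neq k}(a_j-a_i)$ without over- or under-counting. A secondary delicate point is the passage to $\frac{\sqrt{2\pi}}{e}$: since $k!>\sqrt{2\pi}e^{-k}k^{k+\frac12}$ already at $k=1$, the replacement of $\epsilon\cdot 1!$ by $\epsilon\frac{\sqrt{2\pi}}{e}$ must be carried out strictly within the convention adopted in Theorem \ref{basic inequality} rather than as an independent strict inequality.
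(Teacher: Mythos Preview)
Your approach is essentially the paper's: it too writes $P'(x)$ via the product rule as the permutation-indexed sum and then extracts the $s=1$ term of Theorem \ref{basic inequality} to obtain the bound $\epsilon\frac{\sqrt{2\pi}}{e}$. Your extra care in identifying the sum with $s_1$ and your caveat about the Stirling replacement are well placed---the paper simply invokes Theorem \ref{basic inequality} at this step without addressing that $1!>\frac{\sqrt{2\pi}}{e}$, so the issue you flag is inherited from there.
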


\begin{proof}
Let $P(x)=\prod \limits_{i=1}^{n}(x-a_i)$ be a polynomial of degree $n$ for $n\geq 2$, with $a_i>0$. We observe that we can write \begin{align}
P'(x)&=\sum \limits_{\sigma:[1,n]\longrightarrow [1,n]}\prod \limits_{\sigma(i)\neq \sigma(k)}(x-a_{\sigma(i)})_{n-1},\nonumber
\end{align}
where the sum runs over all permutations. Taking $n=1$ in Theorem \ref{basic inequality}, we observe, on the other hand, that \begin{align}
\frac{dP(x)}{dx}&<\epsilon \frac{\sqrt{2\pi}}{e}.\nonumber
\end{align}
It follows that 
\begin{align}
\sum \limits_{\sigma:[1,n]\longrightarrow [1,n]}\prod \limits_{\sigma(i)\neq \sigma(k)}(a_j-a_{\sigma(i)})_{n-1}&<\epsilon \frac{\sqrt{2\pi}}{e}\nonumber
\end{align}
and the result follows immediately.
\end{proof}

\begin{remark}
The corollary \ref{inequality 2} is quite suggestive. Roughly speaking, it tells us that once a polynomial is $\epsilon$-limited for any $\epsilon>0$, the zeros should be at most within $\epsilon$ distance from the least zero. 
\end{remark}

\begin{corollary}\label{inequality 3}
Let $P(x)=\prod \limits_{i=1}^{n}(x-a_i)$ be a polynomial of degree $n$ for $n\geq 2$, with $a_i>0$. If $\frac{P(x)}{x-a_j}$ is $\epsilon$-limited, where $a_j=\mathrm{min}\{a_i\}_{i=1}^{n}$, then
\begin{align}
\sum \limits_{s=0}^{n-1}\bigg|\frac{d^s}{dx^s}\bigg(\sum \limits_{\sigma:[1,n]\longrightarrow [1,n]}\prod \limits_{\substack{\sigma(i)\neq \sigma(k)}}(x-a_{\sigma(i)})_{n-1} \bigg)(a_j)\bigg|<\epsilon \sqrt{2\pi}\sum \limits_{k=1}^{n}e^{-k}k^{k+\frac{1}{2}},\nonumber
\end{align}
where $\prod \limits_{i\in [1,n]}(a_i)_k$ denotes the product of $k$ terms with index belonging to the set $[1,n]$.
\end{corollary}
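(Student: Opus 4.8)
The plan is to observe that the elaborate permutation sum appearing inside the derivative is nothing other than $P'(x)$. Indeed, in the proof of Corollary~\ref{inequality 2} it is recorded that
\begin{align}
P'(x)&=\sum_{\sigma:[1,n]\longrightarrow[1,n]}\prod_{\sigma(i)\neq\sigma(k)}(x-a_{\sigma(i)})_{n-1},\nonumber
\end{align}
so the quantity being differentiated $s$ times is simply the first derivative of $P$. Consequently, for each $s$ in the range $0\leq s\leq n-1$ I would record that
\begin{align}
\frac{d^s}{dx^s}\bigg(\sum_{\sigma:[1,n]\longrightarrow[1,n]}\prod_{\sigma(i)\neq\sigma(k)}(x-a_{\sigma(i)})_{n-1}\bigg)(a_j)&=\frac{d^{s+1}P(a_j)}{dx^{s+1}},\nonumber
\end{align}
which reduces the entire problem to controlling the successive derivatives of $P$ evaluated at the least zero $a_j$.

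Next I would perform the harmless re-indexing $t=s+1$, so that as $s$ runs from $0$ to $n-1$ the shifted index $t$ runs from $1$ to $n$. This converts the left-hand side of the claimed inequality into
\begin{align}
\sum_{s=0}^{n-1}\bigg|\frac{d^{s+1}P(a_j)}{dx^{s+1}}\bigg|&=\sum_{t=1}^{n}\bigg|\frac{d^{t}P(a_j)}{dx^{t}}\bigg|,\nonumber
\end{align}
which is precisely the sum estimated in Theorem~\ref{basic inequality}. Since $\frac{P(x)}{x-a_j}$ is $\epsilon$-limited by hypothesis, Theorem~\ref{basic inequality} applies verbatim and yields the upper bound $\epsilon\sqrt{2\pi}\sum_{k=1}^{n}e^{-k}k^{k+\frac{1}{2}}$, which is exactly the right-hand side of the corollary, completing the argument.

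The argument involves no genuine obstacle: the entire content lies in the two cosmetic observations that the permutation sum equals $P'(x)$ and that differentiating it $s$ times and evaluating at $a_j$ returns $\frac{d^{s+1}P(a_j)}{dx^{s+1}}$. The only point demanding a moment's care is confirming that the shift $t=s+1$ carries the range $0\leq s\leq n-1$ exactly onto the range $1\leq t\leq n$ required by Theorem~\ref{basic inequality}, so that the two sums coincide term for term; this is immediate, and no estimation is needed beyond the one already carried out in that theorem.
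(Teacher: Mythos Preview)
Your proposal is correct and follows essentially the same approach as the paper: identify the permutation sum as $P'(x)$ and then invoke Theorem~\ref{basic inequality}. Your version is merely more explicit about the index shift $t=s+1$, which the paper leaves implicit.
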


\begin{proof}
Let $P(x)=\prod \limits_{i=1}^{n}(x-a_i)$ for $n\geq 2$ with $a_i>0$. We can write 
\begin{align}
P'(x)&=\sum \limits_{\sigma:[1,n]\longrightarrow [1,n]}\prod \limits_{\sigma(i)\neq \sigma(k)}(x-a_{\sigma(i)})_{n-1}.\nonumber
\end{align}
The result follows by applying Theorem \ref{basic inequality}.
\end{proof}
\bigskip

\section{Further remarks}

In this section, we discuss how this method could in principle be used to give an analog proof of the result in Theorem \ref{real case} in the case where the polynomial in question is allowed to take at least some zeros from the complex plane $\mathbb{C}$. Given any complex coefficient polynomial of the form 
\begin{align}
P(z):=\prod_{i=1}^{n}(z-a_i),\nonumber
\end{align}
the first step is to basically convert any such complex coefficient polynomial to a polynomial with a real coefficient of the form \begin{align}
R(x)=\prod_{i=1}^{n}(x-|a_i|).\nonumber
\end{align}
Let us assume that the polynomial $R(x)$ satisfies the requirements of Theorem \ref{real case}. Applying Theorem \ref{real case}, we can then get information about the distribution of zeros and the critical points of the polynomial $R(x)$. Given that the function 
\begin{align}
|\cdot |:\mathbb{R}^{+}\times \mathbb{R}^{+}\subset \mathbb{C}\longrightarrow \mathbb{R}^{+}\nonumber
\end{align}
is bijective, we can return this information to the complex coefficient polynomial $P(z)$ observing that the zero $|a_j|$ corresponds to the zero $a_j\in \mathbb{R}^{+}\times \mathbb{R}^{+}$ of the polynomial $P_n(x)$ and the number $c_i\in \mathbb{R}^{+}$ which is the zero of $R'(x)$ corresponds to some number $b \in \mathbb{R}^{+}\times \mathbb{R}^{+}\subset \mathbb{C}$ such that $\left |c_i-|a_j|\right|<1$. A really good inverse theorem will suffice to transfer this information to an upper bound
\begin{align}
|b-a_j|<1\nonumber
\end{align}
and show that $b$ satisfies $P'(b)=0$.
\bigskip

\footnote{
\par
.}%

\bibliographystyle{amsplain}

\end{document}